\numberwithin{equation}{section}
\def\cb{{\mathcal B}}
\def\cf{{\mathcal F}}
\def\ch{{\mathcal H}}
\def\gb{{\mathfrak B}}
\def\gam{{\mathfrak M}}
\def\bc{{\mathbb C}}
\def\bn{{\mathbb N}}
\def\bz{{\mathbb Z}}
  \def\G{\Gamma}
\def\l{\lambda} \def\L{\Lambda}
\def\r{\rho}
\def\om{\omega} \def\Om{\Omega}
\newtheorem{thm}{Theorem}[section]
\newtheorem{lem}[thm]{Lemma}
\newtheorem{cor}[thm]{Corollary}
\newtheorem{prop}[thm]{Proposition}
\theoremstyle{definition}
\newtheorem{rem}[thm]{Remark}
\newtheorem{defin}[thm]{Definition}
\begin{document}
\title[On the monotone $C^*$-algebra]
{On the monotone $C^*$-algebra}
\author{Vitonofrio Crismale}
\address{Vitonofrio Crismale\\
Dipartimento di Matematica\\
Universit\`{a} degli studi di Bari\\
Via E. Orabona, 4, 70125 Bari, Italy}
\email{\texttt{vitonofrio.crismale@uniba.it}}
\author{Simone Del Vecchio}
\address{Simone Del Vecchio\\
Dipartimento di Matematica\\
Universit\`{a} degli studi di Bari\\
Via E. Orabona, 4, 70125 Bari, Italy} \email{{\tt
simone.delvecchio@uniba.it}}
\author{Stefano Rossi}
\address{Stefano Rossi\\
Dipartimento di Matematica\\
Universit\`{a} degli studi di Bari\\
Via E. Orabona, 4, 70125 Bari, Italy}
\email{\texttt{stefano.rossi@uniba.it}}
\date{\today}
\begin{abstract}
The concrete monotone $C^*$-algebra, that is the  (unital) $C^*$-algebra generated by
monotone independent algebraic random variables of Bernoulli type,
 is characterized abstractly
in terms of generators and relations and is shown to be UHF. Moreover, its Bratteli diagram is explicitly given, which
allows for the computation of its $K$-theory.

\vskip0.1cm\noindent \\
{\bf Mathematics Subject Classification}: 46L05, 47L40, 19K14, 46L53, 60G20. \\
{\bf Key words}: Monotone $C^*$-algebra, AF-algebras, Maximal abelian subalgebras.
\end{abstract}

\maketitle
\section{Introduction}
\noindent

Unlike Classical Probability, Quantum Probability allows for several different ways in which
random variables can be independent. Indeed, an algebraic probability space is
 a pair $(\mathcal{A}, \varphi)$, where $\mathcal{A}$ is a (unital) complex $*$-algebra and $\varphi:\mathcal{A}\rightarrow\bc$ is a (normalized) positive linear functional. Now as soon as $\mathcal{A}$ is non-commutative, there will be more than one way in which such an expression as $\varphi(a_{i_1}a_{i_2}\cdots a_{i_n})$, for $a_{i_j}\in\mathcal{A}$, can be factorized. More precisely, sticking to the categorical formalisms of natural products, exactly five forms of independence can 
arise: tensor, free, Boolean, monotone, anti-monotone, \cite{S, M2}.
In particular, monotone independence for algebraic random variables 
made its first appearance in the concrete model of the so-called monotone Fock
space, \cite{L, M0, M1}, where the arcsine law was seen to be the central limit distribution for sums of position operators. Furthermore, since position operators in the discrete monotone Fock space have a
symmetric Bernoulli distribution in the vacuum state, see also \cite{CL}, the central limit alluded to above is in fact a
non-commutative instance of the de Moivre-Laplace theorem.\\
In this note we draw our attention to the $C^*$-algebra generated by the monotone creators and annihilators.
Despite being by definition an object concretely realized in a given Hilbert space, we show that this $C^*$-algebra can
nevertheless  be recovered also as the universal $C^*$-algebra generated by a countable set of elements satisfying certain
relations. Accomplishing this task entails dispensing with those relations between annihilators and creators on the Fock space involving infinite sums whose convergence is only in the strong topology. Such a difficulty is overcome in Theorem \ref{universal}, where we show that a set of four rules displaying only finite sums are exactly
what is needed to give an abstract presentation of our $C^*$-algebra. We then move on to study the structure of the
$C^*$-algebra. In particular, we show that it is an approximately finite-dimensional $C^*$-algebra, namely  the norm 
closure of a union of an increasing sequence of finite-dimensional algebras. Moreover,  each of these algebras is a full matrix
algebra, and in Theorem \ref{Bratteli} we provide the relative  Bratteli diagram.
Once we have the diagram at hand, we draw some consenquences. For example, in Corollary \ref{simple} we show that the
(non-unital) monotone  $C^*$-algebra is simple, and in Proposition \ref{traceless} that no finite traces exist on it. 
Furthermore, we prove maximality of the (commutative) diagonal subalgebra obtained as the inductive limit of the diagonal subalgebras in each matrix block.
Finally,
we compute the $K$-theory of the $C^*$-algebra showing that its $K_0$ group coincides with the additive group of dyadic
numbers and that the scale is unbounded.

\section{Preliminaries}

For $k\geq 1$, set $I_k:=\{(i_1,i_2,\ldots,i_k) \mid i_1< i_2 < \cdots <i_k, i_j\in \mathbb{Z}\}$. The discrete monotone Fock space is the Hilbert space $\cf_{{\rm mon}}(\ch):=\bigoplus_{k=0}^{\infty} \ch_k$, where $\ch_0=\mathbb{C}$ and for every $k\geq 1$ $\ch_k:=\ell^2(I_k)$. Henceforth we will denote by $\Om$ the vector in $\cf_{{\rm mon}}(\ch)$
given by $(1, 0, 0, \ldots)$ and refer to it  as the Fock vacuum. As is commonly done, we call each
$\ch_k$ the $k$-particle space.
The canonical basis of the discrete monotone Fock space is obtained in the following way. If $(i_1,i_2,\ldots,i_k)\in I_k$ is an increasing sequence  of integers, we denote by $e_{(i_1,i_2,\ldots,i_k)}\in \ell^2(I_k)$ the square summable sequence that is always zero but at $(i_1, i_2, \ldots, i_k)$, where
it is $1$. The vector corresponding to the empty set
is just the vacuum $\Om$, and the corresponding vector state $\om_\Om$ is called the vacuum state. We will often write $e_i$ instead of $e_{(i)}$  to ease our notation.
For every $i\in \mathbb{Z}$, the monotone creation and annihilation operators are respectively given  by $a^\dag_i\Om=e_i$, $a_i\Om=0$ and
\begin{equation*}
a^\dagger_i e_{(i_1,i_2,\ldots,i_k)}:=\left\{
\begin{array}{ll}
e_{(i,i_1,i_2,\ldots,i_k)} & \text{if}\,\, i< i_1 \\
0 & \text{otherwise}, \\
\end{array}
\right.
\end{equation*}
\begin{equation*}
a_ie_{(i_1,i_2,\ldots,i_k)}:=\left\{
\begin{array}{ll}
e_{(i_2,\ldots,i_k)} & \text{if}\,\, k\geq 1\,\,\,\,\,\, \text{and}\,\,\,\,\,\, i=i_1\\
0 & \text{otherwise}. \\
\end{array}
\right.
\end{equation*}
Both $a^\dagger_i$ and $a_i$  have unital norm,
are mutually adjoint, and satisfy the following relations
\begin{equation}
\label{comrul}
\begin{array}{ll}
  a^\dagger_ia^\dagger_j=a_ja_i=0 & \text{if}\,\, i\geq j\,, \\
 
\end{array}
\end{equation}
In addition, for every $i$ the following identity holds
\begin{equation}
\label{comrul2}
a_ia^\dag_j=\delta_{i,j}\left(I-\sum_{k\leq i} a^\dag_ka_k\right)\,,
\end{equation}
where the convergence is in the strong operator topology, and $I$ is the identity
of $\cb(\cf_{{\rm mon}}(\ch))$, see {\it e.g.} \cite{CFL}.
We denote by $\gam$  the concrete unital
$C^*$-subalgebra of $\cb(\cf_{{\rm mon}}(\ch))$ with unit $I$ generated by the set of all annihilators $\{a_i\mid i\in\mathbb{Z}\}$, whereas  $\gam_0$ will denote the unital $*$-algebra of $\gam$ generated by the same set. Of course, $\gam_0$ is norm dense
in $\gam$. For completeness' sake, we recall that the algebra  generated by so-called position
operators $x_i:=a_i+a^\dag_i$  coincides with the whole $\gam$, see \cite[Proposition 5.13]{CFL}.
As the $x_i$'s have a symmetric two-point law with respect to $\om_\Om$, $\gam$ can be thought of as 
generated by Bernoulli monotone independent variables.
Recall that the pair $(\gam,\om_\Om)$ is a $C^*$-probability space, and the $\{a_i, a^\dag_i : i\in \bz\}$ are still monotone independent (algebraic) random variables. From now on $\gam$ will be referred to
as the monotone $C^*$-algebra.\\

In \cite{CFG}, among other things, a Hamel basis was exhibited for $\gam_0$. 
Since that basis will play a role in the present work as well, we next move on to describe what it looks like. 

\begin{defin}
A word $X$ in $\gam_0$ is said to have a $\lambda$-\textbf{form} if there are $m,n\in\left\{  0,1,2,\ldots
\right\}$ and $i_1<i_2<\cdots < i_m, j_1>j_2> \cdots > j_n$ such
that
$$
X=a_{i_1}^{\dagger}\cdots a_{i_m}^{\dagger} a_{j_1}\cdots a_{j_n}\,,
$$
with $X=I$, that is the empty word if $m=n=0$. Its length is $l(X)=m+n$.\\
A word $X$ is said to have a $\pi$-\textbf{form} if there are $m,n\in\left\{0,1,2,\ldots
\right\}$, $k\in\mathbb{Z}$, $i_1<i_2<\cdots < i_m, j_1>j_2> \cdots > j_n$ such that $i_m<k>j_1$ and
$$
X=a_{i_1}^{\dagger}\cdots a_{i_m}^{\dagger} a_{k}a_{k}^{\dagger} a_{j_1}\cdots a_{j_n}\,.
$$
The length is now $l(X)=m+n+2$.
\end{defin}
Let $\L$ be the index set such that $\{X_\l\}_{\l\in\L}$ represents all $\l$-forms, that is
\begin{equation*}
\label{xlambda}
X_\l=a_{i_1^{(\l)}}^{\dagger}\cdots a_{i_{m(\l)}^{(\l)}}^{\dagger} a_{j_1^{(\l)}}\cdots a_{j_{n(\l)}^{(\l)}}\,,
\end{equation*}
for $i_1^{(\l)}<i_2^{(\l)}<\cdots < i_{m(\l)}^{(\l)}, j_1^{(\l)}>j_2^{(\l)}> \cdots > j_{n(\l)}^{(\l)}$, $m(\l),n(\l)\geq 0$. Since all $\l$-forms of the type $a^\dag_ia_i$ are in one-to-one correspondence with $\bz$, then $\bz\subset\L$  in a natural way. After this identification, we put $\G:=\L\setminus\bz$. In \cite[Theorem 3.4]{CFG}  the families $\{X_\l\}_{\l\in\G}$ and $\{a_ia^\dag_i\}_{i\in \mathbb{Z}}$ were proved to form a Hamel basis of $\gam_0$.\\

\section{The monotone $C^*$-algebra}
\subsection{The monotone $C^*$-algebra as a universal $C^*$-algebra}
The main goal of this section is to come to
a characterization of the monotone $C^*$-algebra as a universal $C^*$-algebra generated by a countable set of elements
satisfying certain relations. To this aim, we start by noting that \eqref{comrul}--\eqref{comrul2} readily imply the following equalities involving the monotone creators and annihilators 
\begin{align}
\label{monrule1}&a_ia_j=0\,\, {\rm if}\,\, i\leq j\\
\label{monrule2}&a_ia_ja_j^\dag= \alpha_{i, j}a_i\\
\label{monrule3}&a^\dag_ia_i=a_{i-1}a^\dag_{i-1}- a_ia^\dag_i\\
\label{monrule4}&a_ia^\dag_i a_j=a_j-\alpha_{i, j}\sum_{j<k\leq i}a^\dag_ka_ka_j\,\, 
\end{align}
where $\alpha_{i,j}=1$ if $i>j$ and $\alpha_{i,j}=0$ otherwise.
We shall refer to the set of the four rules above as the monotone rules.

\begin{thm}\label{universal}
The universal unital $C^*$-algebra $\gb$ generated by a set $\{b_i: i\in\bz\}$ satisfying the monotone rules exists.
Moreover, the map $\Psi:\gb\rightarrow\mathfrak{M}$ such that $\Psi(b_i)=a_i$ for every $i\in\bz$ is
a $*$-isomorphism.
\end{thm}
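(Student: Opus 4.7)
\emph{Step 1: existence of $\gb$ and construction of $\Psi$.}
The first task is to verify that the universal $C^*$-seminorm associated with the monotone rules is finite on the free $*$-algebra, for which it suffices to prove that each generator is a contraction in every representation. Setting $j=i$ in \eqref{monrule4} (where $\alpha_{i,i}=0$) yields $b_ib_i^*b_i=b_i$, so $(b_ib_i^*)^2=b_ib_i^*$; being self-adjoint, $b_ib_i^*$ is a projection, whence $\|b_i\|\leq 1$. Thus $\gb$ is well defined, and since the $a_i\in\gam$ satisfy the monotone rules, the universal property supplies $\Psi\colon\gb\to\gam$ with $\Psi(b_i)=a_i$. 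Surjectivity is immediate from the fact that $\{a_i\}$ generates $\gam$.

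\emph{Step 2: algebraic bijection on the dense $*$-subalgebras.}
Let $\gb_0\subset\gb$ be the $*$-algebra generated by $\{b_i\}$. The plan is to prove by induction on word length that every monomial in $\{b_i,b_i^*\}$ reduces, using only \eqref{monrule1}-\eqref{monrule4}, to a finite linear combination of abstract $\lambda$-form and $\pi$-form elements $B_\lambda\in\gb_0$. In the inductive step: \eqref{monrule1} kills adjacent annihilators in the wrong order; \eqref{monrule2} contracts the block $b_ib_jb_j^*$; \eqref{monrule4} normal-orders $b_ib_i^*$ past a subsequent annihilator (crucially as a finite sum); and \eqref{monrule3} converts each diagonal factor $b_i^*b_i$ into a difference of $\pi$-form projections. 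Applying $\Psi$ sends the resulting spanning family of $\gb_0$ to the Hamel basis $\{X_\lambda\}_{\lambda\in\G}\cup\{a_ia^\dag_i\}_{i\in\bz}$ of $\gam_0$ from \cite{CFG}; linear independence of the images in $\gam_0$ forces linear independence in $\gb_0$, so $\Psi|_{\gb_0}\colon\gb_0\to\gam_0$ is a $*$-algebra isomorphism.

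\emph{Step 3: upgrading to a $C^*$-isomorphism.}
It remains to show that the universal $C^*$-norm on $\gb_0$ coincides with the concrete norm inherited from $\gam$. My intended route is to exhibit $\gb_0$ as an ascending union of finite-dimensional $*$-subalgebras, using the $\lambda$- and $\pi$-form calculus of Step~2 to produce explicit matrix-unit systems; because any injective $*$-homomorphism between finite-dimensional $C^*$-algebras is isometric, $\Psi$ would then be isometric on each stage, hence on $\gb_0$, and extend by continuity to the asserted $*$-isomorphism.

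The principal obstacles lie in Steps~2 and~3. In Step~2 the normal-ordering must be executed using only the four \emph{finite} monotone rules, dispensing with the strongly convergent sum \eqref{comrul2}; the systematic replacement is the telescoping identity $\sum_{j<k\leq i}b_k^*b_k=b_jb_j^*-b_ib_i^*$, which is obtained by iterating \eqref{monrule3}, and one must check that a suitable length (or lexicographic) invariant strictly decreases at every reduction. In Step~3 identifying the correct finite-dimensional filtration is delicate: the naive choice (the $*$-subalgebra generated by $\{b_i:i\in F\}$ for a finite interval $F$) is actually infinite-dimensional, because iterating \eqref{monrule3} drags into it the entire chain of projections $\{b_kb_k^*:k\leq\max F\}$. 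Producing a genuinely finite-dimensional filtration compatible with the Bratteli-diagram structure of Theorem~\ref{Bratteli} is the hardest part of the argument.
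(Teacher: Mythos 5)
Your overall strategy coincides with the paper's: reduce words to $\lambda$- and $\pi$-forms using only the four finite rules, compare with the Hamel basis of $\gam_0$ from \cite{CFG} to get injectivity on the dense $*$-subalgebra, and then use the finite-dimensional filtration $\gb_n=C^*(\{b_i:|i|\leq n\})$ together with uniqueness of the $C^*$-norm on finite-dimensional $*$-algebras to upgrade to an isometric $*$-isomorphism. Your Step 1 observation that $j=i$ in \eqref{monrule4} gives $b_ib_i^*b_i=b_i$, so that $b_ib_i^*$ is a projection and $\|b_i\|\leq 1$ in every representation, is a clean and correct way to see that the universal $C^*$-seminorm is finite; the paper obtains this less directly from finite-dimensionality of the $\gb_n$.

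The one real problem is the obstacle you raise at the end of Step 3, which is not an obstacle at all: you have misread the direction of \eqref{monrule3}. Rewritten as $b_{i-1}b_{i-1}^{*}=b_i^{*}b_i+b_ib_i^{*}$, that rule expresses the projection with index $i-1$ in terms of elements indexed by $i$; it therefore shows that $b_{i-1}b_{i-1}^{*}$ \emph{already lies} in the $*$-algebra generated by $b_i$, rather than forcing new generators into it, and iterating it downward stalls after one step (to reach $b_{\min F-2}\,b_{\min F-2}^{*}$ you would need the generator $b_{\min F-1}$, which is not available). Consequently the $*$-subalgebra generated by $\{b_i:i\in F\}$ is spanned by the finitely many $\lambda$- and $\pi$-forms with indices in $F$ (plus at most the single extra projection $b_{\min F-1}b_{\min F-1}^{*}$), hence is finite-dimensional --- and indeed your own Step 2 already identifies it with its image in $\gam_0$, which is the finite-dimensional algebra $\gam_n\cong M_{2^{2n+1}}(\bc)$ of Proposition \ref{findim}, so an infinite-dimensional $\gb_n$ would contradict the injectivity you have just established. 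The ``naive choice'' of filtration is exactly the one the paper uses, and with it Step 3 closes immediately: each $\gb_n$ carries a unique $C^*$-norm, so the universal norm and the norm pulled back from $\gam$ agree on $\gb_0=\bigcup_n\gb_n$, and $\Psi$ extends to the desired isomorphism.
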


\begin{proof}
Let $\gb_0$ be the universal $*$-algebra generated by the set $\{b_i: i\in\bz\}$ satysfying the monotone rules. By universality the map $b_i\mapsto a_i$, $i\in\bz$, extends to a representation
$\Psi_0: \gb_0\rightarrow \mathfrak{M}_0$.\\ 
We claim that $\Psi_0$ is injective. To prove this, 
we start by pointing out that Rules \eqref{monrule2}--\eqref{monrule4} imply the relations in Lemma 5.4 of \cite{CFL}, which in turn provide
the system of linear generators given by  $\lambda$-forms and $\pi$-forms, {\it cf.} \cite[Lemma 5.5]{CFL}.
Now  arguing as in the proofs of Lemma 3.2 and Lemma 3.3 in \cite{CFG}, one sees that
\eqref{monrule1}--\eqref{monrule3}--\eqref{monrule4} imply that
the family of all $\lambda$-forms, except for $b^\dag b_i$, and $b_ib^\dag_i$, $i\in\bz$,
is a Hamel basis of $\gb_0$ as a vector space.
For any integer $n\geq 1$, let $\gb_n\subset\gb_0$ be the (unital) $*$-subalgebra
generated by the finite set $\{b_i: |i|\leq n\}$.
$\gb_n$ is a finite-dimensional algebra because it is linearly generated by the elements in the Hamel basis
whose indices are picked from $\{i: |i|\leq n\}$. Denote by $\mathfrak{M}_n\subset\mathfrak{M}$ the (unital) $C^*$-subalgebra
generated by the finite set $\{a_i: |i|\leq n\}$.  Now $\mathfrak{M}_n$ is a finite-dimensional 
$C^*$-subalgebra, and $\Psi_0(\gb_n)=\mathfrak{M}_n$. Therefore, the restriction
$\Psi_0\lceil_{\gb_n}$ is injective because $\gb_n$ and $\mathfrak{M}_n$ have the same dimension.
Since $\gb_0=\cup_n\gb_n$, we see at once that $\Psi_0$ is injective.\\
Let now $\|\cdot\|_{\rm max}: \gb_0\rightarrow [0, \infty)$ be the
 the maximal $C^*$-seminorm on $\gb_0$. First note that
$\|\cdot\|_{\rm max}$ is actually a norm because $\gb_0$  has
a faithful representation.
Second, the maximal norm is well defined, namely it is bounded on each $b_i$. More precisely, there is exactly one $C^*$-norm on $\gb_0$.  Indeed, any $C^*$-norm is uniquely determined by its restriction on each subalgebra $\gb_n$.  But because all $\gb_n$'s are finite-dimensional, there can be only one such norm. \\  

Denote by $\gb$ the completion of $\gb_0$ w.r.t. the maximal norm above, and let  $\Psi: \gb\rightarrow\mathfrak{M}$ be the extension of $\Psi_0$. All is left to
do is show that $\Psi$ is still injective. This is a consequence of the fact that
$\Psi\lceil_{\gb_0}=\Psi_0$ is isometric.
\end{proof}
Henceforth we will denote by $\mathfrak{I}\subset\mathfrak{B}$ the non-unital $C^*$-algebra
generated by $\{b_i: i\in\bz\}$. Note that $\mathfrak{I}$ is in fact a closed two-sided ideal.

\begin{rem}
The restriction of the vacuum state to $\mathfrak{I}$ can also be characterized as the unique
shift-invariant state. More explicitly, the vacuum is the only state $\om$ on $\mathfrak{I}$ such that
$\om\circ\tau=\om$, where $\tau$ is the automorphism of $\mathfrak{B}$ uniquely determined
by $\tau(b_i)=b_{i+1}$, $i\in\bz$. See also \cite[Theorem 5.12]{CFL}.
\end{rem}

We end the section by showing that the Fock representation of
$\mathfrak{B}$ admits an intrinsic description
in terms of the relation \eqref{comrul2}, understood as an equality that holds at the level of
the von Neumann algebra generated by the representation.

\begin{lem}\label{limproj}
Any irreducible representation $\pi:\gb\rightarrow \cb(\ch_\pi)$ such that $\displaystyle{\lim_{i\rightarrow +\infty} \pi(b_ib_i^\dag)\neq 0}$ is unitarily equivalent
to the Fock representation.
\end{lem}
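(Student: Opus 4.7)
The plan is to locate a vacuum-like vector $\xi$ inside $\ch_\pi$, show that the associated vector state matches the vacuum state, and then appeal to the uniqueness of the GNS construction together with irreducibility of $\pi$. First, observe that each $b_i b_i^\dag$ is a self-adjoint projection in $\gb$: rule \eqref{monrule4} specialised to $i=j$ gives $\alpha_{i,i}=0$ and hence $b_i b_i^\dag b_i = b_i$, from which $(b_i b_i^\dag)^2 = b_i b_i^\dag$. Rewriting \eqref{monrule3} as $b_{i-1}b_{i-1}^\dag = b_i b_i^\dag + b_i^\dag b_i$ shows that $\{\pi(b_i b_i^\dag)\}_{i\in\bz}$ is decreasing in $i$, hence converges strongly in the enveloping von Neumann algebra $\pi(\gb)^{\prime\prime}$ to a projection $P$, which is nonzero by hypothesis. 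Pick a unit vector $\xi$ in the range of $P$. Since $P \le \pi(b_i b_i^\dag)$ for every $i$, one has $\pi(b_i b_i^\dag)\xi=\xi$, and \eqref{monrule3} then gives $\|\pi(b_i)\xi\|^2 = \langle\pi(b_i^\dag b_i)\xi,\xi\rangle = \langle \pi(b_{i-1}b_{i-1}^\dag)\xi - \pi(b_i b_i^\dag)\xi,\xi\rangle = 0$, so $\pi(b_i)\xi=0$ for every $i\in\bz$.

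Next, compare the vector state $\phi_\xi(x):=\langle \pi(x)\xi,\xi\rangle$ on $\gb$ with $\om_\Om\circ\Psi$, where $\Psi$ is the $*$-isomorphism of Theorem \ref{universal}. By continuity and linearity it suffices to check the equality on the Hamel basis of $\gb_0$ obtained by transferring the basis of $\gam_0$ through $\Psi^{-1}$ from \cite[Theorem 3.4]{CFG}, namely on $\{X_\lambda\}_{\lambda\in\Gamma}\cup\{b_i b_i^\dag\}_{i\in\bz}$. On the unit and on each $b_i b_i^\dag$ both states evaluate to $1$ (on the $\pi$ side because $\pi(b_i b_i^\dag)\xi=\xi$, on the Fock side because $a_i a_i^\dag\Om=\Om$). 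For a $\lambda$-form with at least one annihilator, $\pi(X_\lambda)\xi$ is killed by the rightmost annihilator; for one with only creators, moving the word to the other side of the inner product and using $\pi(b_{i_1})\xi=0$ yields zero. The Fock side vanishes in exactly the same two situations via $a_i\Om=0$ or orthogonality of $e_{(i_1,\ldots,i_m)}$ to $\Om$. Hence $\phi_\xi = \om_\Om\circ\Psi$ on $\gb$.

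Finally, the uniqueness of the GNS construction produces a unitary equivalence between the cyclic subrepresentation of $\pi$ generated by $\xi$ and the Fock representation. Since $\overline{\pi(\gb)\xi}$ is a nonzero $\pi(\gb)$-invariant closed subspace of $\ch_\pi$, irreducibility of $\pi$ forces it to coincide with $\ch_\pi$, which concludes the argument. The main obstacle I anticipate is the verification in the second paragraph: while each individual case is straightforward, the argument relies essentially on the explicit Hamel basis of \cite[Theorem 3.4]{CFG}, and one must treat separately the basis elements of the form $b_i b_i^\dag$ (since the alternative $\lambda$-forms $b_i^\dag b_i$ are, by \eqref{monrule3}, expressed as differences of such projections). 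A further subtlety is that the limit projection $P$ lives only in the enveloping von Neumann algebra $\pi(\gb)^{\prime\prime}$ and not in $\pi(\gb)$ itself, which is why the argument proceeds through the vector $\xi$ and the state $\phi_\xi$ rather than by an algebraic manipulation inside $\gb$.
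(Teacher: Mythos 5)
Your proposal is correct and follows essentially the same route as the paper: take a unit vector $\xi$ in the range of the limit projection $P$, show $\pi(b_i)\xi=0$ for all $i$, identify the vector state $\phi_\xi$ with the vacuum state, and conclude via uniqueness of the GNS representation together with irreducibility. The only (harmless) deviations are that you derive $\pi(b_i)\xi=0$ from \eqref{monrule3} via a norm computation rather than from $b_i^2b_i^\dag=0$ as in \eqref{monrule1}--\eqref{monrule2}, and that you spell out the Hamel-basis verification of $\phi_\xi=\om_\Om\circ\Psi$, which the paper leaves implicit.
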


\begin{proof}
We first observe that $\pi(b_ib_i^\dag)$ is a monotone decreasing sequence of orthogonal projections
thanks to \eqref{monrule3}, so  $\displaystyle{\lim_{i\rightarrow +\infty} \pi(b_ib_i^\dag)}$ exists in the strong
operator topology and is an orthogonal projection, say $P$, which is nonzero by hypothesis.
Note that if $\xi$ lies in the range of $P$, then $\pi(b_i)\xi=0$ for every $i\in\bz$, because by \eqref{monrule1}--\eqref{monrule2}
$$\pi(b_i)\xi=\pi(b_i)\pi(b_ib_i^\dag)\xi=\pi(b_i^2b_i^\dag)\xi=0\,.$$
But then the vector state $\om_\xi$ associated with any such vector
coincides by construction with the Fock vacuum state. The conclusion follows by uniqueness (up to unitary equivalence)
of the GNS representation.
\end{proof}

\begin{prop}
Any irreducible representation $\pi:\gb\rightarrow \cb(\ch_\pi)$ such that $\displaystyle{\bigcap_{i\in\bz}{\rm Ker}\,\pi(b_i)\neq 0}$
and $\displaystyle{\pi(b_ib^\dag_i)=I-\sum_{k\leq i} \pi(b^\dag_kb_k)}$
is unitarily equivalent
to the Fock representation.
\end{prop}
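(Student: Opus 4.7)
The plan is to deduce the statement from Lemma \ref{limproj}: it is enough to exhibit a nonzero vector on which the strong limit $\displaystyle{\lim_{i\rightarrow +\infty}\pi(b_ib_i^\dag)}$ does not vanish.

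Concretely, I would pick any nonzero $\xi\in\bigcap_{i\in\bz}{\rm Ker}\,\pi(b_i)$, which exists by the first hypothesis. For every $k\in\bz$ one immediately has $\pi(b_k^\dag b_k)\xi=0$, so applying the second hypothesis
\begin{equation*}
\pi(b_ib^\dag_i)=I-\sum_{k\leq i} \pi(b^\dag_kb_k)
\end{equation*}
to the vector $\xi$ gives $\pi(b_ib_i^\dag)\xi=\xi$ for every $i\in\bz$. By rule \eqref{monrule3}, the operators $\pi(b_ib_i^\dag)$ form a decreasing sequence of orthogonal projections (as was already observed at the beginning of the proof of Lemma \ref{limproj}), hence the strong limit $P:=\lim_{i\rightarrow+\infty}\pi(b_ib_i^\dag)$ exists, is a projection, and satisfies $P\xi=\xi$. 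In particular $P\neq 0$, so Lemma \ref{limproj} applies and yields the unitary equivalence of $\pi$ with the Fock representation.

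I do not anticipate any real obstacle: the only point worth flagging is that one must not try to interpret the infinite sum $\sum_{k\leq i}\pi(b_k^\dag b_k)$ directly (it need not converge in norm), but merely evaluate the already-assumed operator identity on the distinguished vector $\xi$, where every summand is zero by construction.
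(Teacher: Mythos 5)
Your argument is exactly the paper's: take a nonzero $\xi$ in $\bigcap_{i\in\bz}{\rm Ker}\,\pi(b_i)$, use the assumed identity to get $\pi(b_ib_i^\dag)\xi=\xi$ for all $i$, conclude the limit projection is nonzero, and invoke Lemma \ref{limproj}. The proposal is correct and matches the paper's proof, with the added (and welcome) remark about how the infinite sum is to be interpreted on $\xi$.
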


\begin{proof}
A direct application of Lemma \ref{limproj}. Indeed, any (non-null) $\xi$ that sits in $\displaystyle{\bigcap_{i\in\bz}{\rm Ker}\,\pi(b_i)}$ also
satisfies $\pi(b_ib_i^\dag)\xi=\xi$ for every $i\in\bz$ by virtue of the equality $\displaystyle{\pi(b_ib^\dag_i)=I-\sum_{k\leq i} \pi(b^\dag_kb_k)}$.
\end{proof}

\subsection{The structure of the monotone $C^*$-algebra}

More information on $\mathfrak{B}$ is gained by looking more closely at the increasing sequence
of its subalgebras $\gb_n:=C^*(\{b_i: |i|\leq n\})$. These are not only finite-dimensional but also simple, as we next 
prove.

\begin{prop}\label{findim}
For every $n\in\bn$, the $C^*$-algebra $\gb_n:=C^*(\{b_i: |i|\leq n\})$ is a full matrix algebra. More precisely, one has
 $\gb_n\cong M_{2^{2n+1}}(\bc)$.
\end{prop}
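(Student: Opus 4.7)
The plan is to work in the Fock representation, using the isomorphism $\Psi : \gb \to \gam$ from Theorem \ref{universal} to identify $\gb_n$ with $\mathfrak{M}_n := C^*(\{a_i : |i|\leq n\}) \subseteq \cb(\cf_{\rm mon}(\ch))$. The strategy is to restrict the Fock representation to a well-chosen finite-dimensional invariant subspace of dimension $2^{2n+1}$, show that the restriction is onto the full matrix algebra, and finally verify it is faithful.

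The natural candidate is $\ch^{(n)} := \spn\{e_C : C \subseteq \{-n,\ldots,n\}\}$; the defining formulas for $a^\dag_i$ and $a_i$ imply that this subspace is preserved by every generator with $|i|\leq n$, so the Fock representation restricts to $\pi_n : \mathfrak{M}_n \to \cb(\ch^{(n)})$. For each $A=\{i_1<\cdots<i_m\}\subseteq\{-n,\ldots,n\}$ put $X_A^\dag := a^\dag_{i_1}\cdots a^\dag_{i_m}$, so that $X_A^\dag\Om = e_A$. The observation that makes everything work is that $a_n a^\dag_n$ restricts to $\ch^{(n)}$ as the rank-one projection onto $\Om$: globally it projects onto $\spn\{e_C : \min C>n\ \text{or}\ C=\emptyset\}$, and within $\ch^{(n)}$ only the empty set satisfies either clause. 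Consequently
\begin{equation*}
E_{A,B} \;:=\; X_A^\dag\,(a_n a^\dag_n)\,(X_B^\dag)^* \;\in\; \mathfrak{M}_n
\end{equation*}
is sent by $\pi_n$ to the matrix unit $|e_A\rangle\langle e_B| \in \cb(\ch^{(n)})$. Since the $E_{A,B}$ exhaust an entire system of matrix units, $\pi_n(\mathfrak{M}_n) = \cb(\ch^{(n)}) \cong M_{2^{2n+1}}(\bc)$.

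The remaining step, and the main technical point, is injectivity of $\pi_n$. By rule \eqref{monrule3} at $i=-n$ one has $a^\dag_{-n}a_{-n}+a_{-n}a^\dag_{-n} = a_{-n-1}a^\dag_{-n-1}$, so the projection $p_n:=a_{-n-1}a^\dag_{-n-1}$ actually lies in $\mathfrak{M}_n$; in the Fock representation it is the projection onto $\spn\{e_E : \min E \geq -n\ \text{or}\ E=\emptyset\}$. A direct check on the generators shows $p_n a_i = a_i p_n = a_i$ for $|i|\leq n$, so $p_n$ is the unit of $\mathfrak{M}_n$ and every $T\in\mathfrak{M}_n$ satisfies $T = p_n T p_n$; in particular $T$ vanishes on $e_E$ whenever $\min E<-n$. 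Assuming now $\pi_n(T)=0$, take any $E$ with $\min E\geq -n$ and decompose $E = C\sqcup D$ with $C\subseteq\{-n,\ldots,n\}$ and $D\subseteq\{n+1,n+2,\ldots\}$. Each $a_i$, $a^\dag_i$ with $|i|\leq n$ acts only on the $C$-part and leaves $D$ untouched, so $Te_E$ is completely controlled by $Te_C=0$, giving $Te_E=0$. Combined with the previous case, this yields $T=0$. Hence $\pi_n$ is a $*$-isomorphism and $\gb_n\cong M_{2^{2n+1}}(\bc)$. The main obstacle is precisely this faithfulness step, which hinges on recognising the tensor-like factorisation of the action of $\mathfrak{M}_n$ on $p_n\cf_{\rm mon}(\ch)$.
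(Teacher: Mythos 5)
Your proof is correct and follows essentially the same route as the paper: both restrict the Fock representation to the $2^{2n+1}$-dimensional invariant subspace spanned by the $e_C$ with $C\subseteq\{-n,\dots,n\}$ and exploit the fact that $a_na_n^\dag$ acts there as the projection onto the vacuum. The only differences are in execution rather than strategy: you get surjectivity by exhibiting explicit matrix units where the paper argues irreducibility via a commutant argument, and you spell out (via the unit $p_n$ and the shifted copies indexed by $D\subseteq\{n+1,n+2,\dots\}$) the direct-sum decomposition of the Fock space into equivalent and degenerate subrepresentations that the paper's faithfulness step only asserts.
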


\begin{proof}
It is enough to exhibit a faithful irreducible representation of $\gb_n$ on a finite-dimensional Hilbert space of dimension
$2^{2n+1}$.
To this end, let $V_n\subset \cf_{\rm mon}(\ch)$ be the finite-dimensional subspace generated by tensors 
$ e_{(i_1,i_2,\ldots,i_k)}$ whose indices $i_1, i_2, \ldots, i_k$ are all taken from the set $\{j: |j|\leq n\}$.
Note that $\Om$ corresponds to the empty set of indices and ${\rm dim}\, V_n=2^{2n+1}$. Now it is straightforward to see that $V_n$ is invariant for $\gb_n$.
The conclusion will then be achieved once we have shown that the restriction 
of the Fock representation (understood as a representation of $\gb_n$) to $V_n$, which we denote by $\r$, is both faithful and irreducible.\\
As for irreducibility, we start by observing that the vacuum vector $\Om$ is cyclic and
the projections $P_\Om$ of $V_n$ onto $\bc\Om$ belongs to $\r(\gb_n)$  since $P_\Om=\r(b_nb_n^\dag)$, as can be easily verified.
 The conclusion is now reached by the following standard argument.
Let us show that the commutant $\rho(\mathfrak{B}_n)'$ is trivial. Indeed, if
$T\in\rho(\mathfrak{B}_n)'$, one has $TP_\Om=P_\Om T$, hence $T\Om$ is in $\bc\Om$. Since $\Om$ is separating for 
$\rho(\mathfrak{B}_n)'$, one finds that $T$ must be a scalar.\\
In order to prove the faithfulness of $\r$, note that the  full monotone Fock space 
$\cf_{\rm mon}(\ch)$ decomposes into a direct sum of $\gb_n$-invariant spaces which are unitary equivalent
to either $\r$ or a degenerate representation.
But then $\r$ must be faithful because the Fock representation is so.
\end{proof}

\begin{cor}\label{simple}
$\mathfrak{I}$ is a simple $C^*$-algebra.
\end{cor}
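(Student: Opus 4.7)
The plan is to exploit the AF structure of $\mathfrak{I}$ furnished by Proposition~\ref{findim}: I would write $\mathfrak{I}=\overline{\bigcup_{n\ge 0}\gb_n}$ and then apply the standard ideal-vs.-inductive-limit argument. The inclusion $\bigcup_n\gb_n\subseteq\mathfrak{I}$ is the point requiring care, because each $\gb_n\cong M_{2^{2n+1}}(\bc)$ has a unit that does not coincide with the unit $I$ of $\gb$. Nevertheless the local unit $E_n$ of $\gb_n$ already sits in the non-unital algebra $\mathfrak{I}$: telescoping \eqref{monrule3} yields
$$
E_n=\sum_{k=-n}^{n}b_k^\dag b_k + b_nb_n^\dag,
$$
which is a $*$-polynomial in the generators with no constant term. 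The reverse inclusion $\mathfrak{I}\subseteq\overline{\bigcup_n\gb_n}$ is clear, since every $*$-polynomial in the $b_i$'s without constant term lies in some $\gb_n$.

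Next I would invoke the standard fact that for any closed two-sided ideal $J\subseteq\mathfrak{I}$,
$$
J=\overline{\bigcup_n(J\cap\gb_n)}.
$$
This is obtained by considering the quotient map $\pi:\mathfrak{I}\rightarrow\mathfrak{I}/J$ and noting that the induced injection $\gb_n/(J\cap\gb_n)\hookrightarrow\mathfrak{I}/J$ is isometric, as any injective $*$-homomorphism of $C^*$-algebras is. Hence any $x\in J$, being approximable in norm by elements of some $\gb_n$, can in fact be approximated by elements of $J\cap\gb_n$.

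With these two facts in hand the conclusion follows quickly. Each intersection $J\cap\gb_n$ is a closed ideal of the simple matrix algebra $\gb_n$, hence is either $\{0\}$ or $\gb_n$. If $J\neq\{0\}$, the identity above forces $J\cap\gb_{n_0}\neq\{0\}$ for some $n_0$, and therefore $J\cap\gb_{n_0}=\gb_{n_0}$; the monotonicity of the sequence $(\gb_n)$ propagates this to all $n\ge n_0$, so that taking closures yields $J\supseteq\overline{\bigcup_n\gb_n}=\mathfrak{I}$. The only mild obstacle is the first reduction: one must recognize that the local unit $E_n$, although genuinely distinct from $I$, arises from \eqref{monrule3} as a canonical element of the non-unital algebra $\mathfrak{I}$.
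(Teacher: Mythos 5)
Your argument is correct and is essentially the paper's own proof: the paper disposes of the corollary in one line by noting that $\mathfrak{I}$ is an inductive limit of the simple algebras $\gb_n\cong M_{2^{2n+1}}(\bc)$, and you have simply written out the standard details behind that remark (the identification $\mathfrak{I}=\overline{\bigcup_n\gb_n}$ via the local units $E_n=\sum_{k=-n}^{n}b_k^\dag b_k+b_nb_n^\dag\in\mathfrak{I}$, and the lemma $J=\overline{\bigcup_n(J\cap\gb_n)}$ for a closed two-sided ideal $J$). No changes needed.
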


\begin{proof}
It follows from Proposition \ref{findim} because $\mathfrak{I}$ is the inductive limit of simple $C^*$-algebras.
\end{proof}

\begin{cor}
$\mathfrak{I}$ is the only (proper) norm-closed two-sided ideal of the monotone $C^*$-algebra
$\gb$.
\end{cor}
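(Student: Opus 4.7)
My plan is to classify closed two-sided ideals of $\gb$ using three ingredients: (a) the simplicity of $\mathfrak{I}$ (Corollary \ref{simple}), (b) the identification $\gb/\mathfrak{I}\cong\bc$, and (c) the essentiality of $\mathfrak{I}$ in $\gb$, i.e., the fact that $x\mathfrak{I}=0$ forces $x=0$.

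For (b), I would observe that the four monotone rules are all homogeneous of positive degree in the generators: every monomial appearing on either side contains at least one $b_i$ or $b_i^\dag$. Hence the assignment $b_i\mapsto 0$, $I\mapsto 1$ respects the rules and extends to a character $\epsilon:\gb\to\bc$. Its kernel is a closed two-sided ideal of codimension one which contains every $b_i$, hence contains $\mathfrak{I}$; conversely $I\notin\mathfrak{I}$ (otherwise $\mathfrak{I}=\gb$ and $\epsilon$ would be identically zero), so $\mathfrak{I}=\ker\epsilon$ and $\gb=\mathfrak{I}\oplus\bc I$.

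For (c), I would use the sequence of projections $p_n\in\mathfrak{I}$ given by the units of the finite matrix blocks $\gb_n\cong M_{2^{2n+1}}$ from Proposition \ref{findim}. In the faithful Fock representation $\rho$ (Theorem \ref{universal}), each $\rho(p_n)$ acts as the identity on the invariant subspace $V_n$, and since $V_n$ exhausts $\cf_{\rm mon}(\ch)$, $\rho(p_n)\to I$ strongly. Therefore, if $x\in\gb$ satisfies $x\mathfrak{I}=0$, then in particular $xp_n=0$ for every $n$, so $\rho(x)=\lim_n\rho(x)\rho(p_n)=0$ in the strong operator topology, and faithfulness forces $x=0$.

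With these pieces in place, the main argument becomes routine. Let $J\subset\gb$ be a proper closed two-sided ideal. Then $J\cap\mathfrak{I}$ is a closed two-sided ideal of the simple algebra $\mathfrak{I}$, hence either $0$ or $\mathfrak{I}$. In the first case, for each $x\in J$ we have $x\mathfrak{I}\subseteq J\cap\mathfrak{I}=0$, so essentiality yields $x=0$ and $J=0$. In the second, $J\supseteq\mathfrak{I}$, and $J/\mathfrak{I}$ is a proper ideal of $\gb/\mathfrak{I}\cong\bc$, forcing $J=\mathfrak{I}$. The main obstacle is really (c): it is where the fine structure of $\mathfrak{I}$ (the matrix-block approximate unit $\{p_n\}$) and the faithfulness of the Fock representation come together, whereas everything else is formal bookkeeping about intersections and quotients of ideals.
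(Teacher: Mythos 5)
Your argument is correct and essentially coincides with the paper's proof: both split on whether $J\cap\mathfrak{I}$ is $\{0\}$ or $\mathfrak{I}$, handle the first case via the strongly convergent approximate unit of matrix-block units in the Fock representation, and the second via the codimension-one property of $\mathfrak{I}$. Your only addition is an explicit justification of $\gb/\mathfrak{I}\cong\bc$ through the character $b_i\mapsto 0$ (which the homogeneity of the monotone rules indeed permits), a point the paper simply takes for granted.
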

\begin{proof}
Let $\mathfrak{K}$ be a norm-closed two-sided ideal of the monotone algebra.
Thanks to the above corollary, the intersection $\mathfrak{I}\cap\mathfrak{K}$ is either
$\mathfrak{I}$ or $\{0\}$. In the first case, $\mathfrak{K}$ contains $\mathfrak{I}$, thus
$\mathfrak{K}=\mathfrak{I}$ or $\mathfrak{K}=\gb$ since
$\mathfrak{I}$ has codimension one in $\gb$.\\
The second case can be dealt with in the Fock representation.
Denote by $E_n$ the unit of $\mathfrak{M}_n\cong M_{2^{2n+1}}(\bc)$.
The sequence $\{E_n:n\in\bn\}$ is contained
in $\mathfrak{I}$ and strongly converges to $I$. If now $T$ is any element of $\mathfrak{K}$, then
by assumption $TE_n=0$ for every $n$ since $TE_n\in \mathfrak{I}\cap \mathfrak{K}$, and thus $T=0$.
\end{proof}

As $\mathfrak{I}$ is an approximately finite-dimensional $C^*$-algebra, the full information on its
structure is encoded by the so-called Bratteli diagram, see {\it e.g.} Chapter 3 in \cite{D}.
Each $\mathfrak{B}_n$ is represented by a single bullet because $\mathfrak{B}_n$ is simple and thus is made up of only one
full-matrix block. Finally, each block is linked to the next by a number of arrows that equals the multiplicity
of the relative embedding, see again Chapter 3 in \cite{D}.

\begin{thm}\label{Bratteli}
The Bratteli diagram of $\mathfrak{I}$ is 

$$
\overset{2}{\bullet}\rightrightarrows
\overset{8}{\bullet}\rightrightarrows\cdots\rightrightarrows\overset{2^{2n+1}}{\bullet} \rightrightarrows\overset{2^{2n+3}}{\bullet}\rightrightarrows\cdots
$$

\end{thm}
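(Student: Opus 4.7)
My plan is to use Proposition \ref{findim} to identify the vertices of the Bratteli diagram and then to decompose the irreducible $\mathfrak{B}_{n+1}$-module $V_{n+1}$ under the action of $\mathfrak{B}_n$ in order to read off the arrow multiplicities between consecutive levels. Since each $\mathfrak{B}_n$ is a simple full matrix algebra of matrix size $2^{2n+1}$, level $n$ of the diagram consists of a single bullet with the stated label; and by simplicity of $\mathfrak{B}_n$, the number of arrows from level $n$ to level $n+1$ equals the number of copies of the irreducible representation $V_n$ that appear as $\mathfrak{B}_n$-subrepresentations of $V_{n+1}$.

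To carry out the decomposition I would refine the canonical Fock basis of $V_{n+1}$ according to membership of the two new indices $\pm(n+1)$: for $S' \subseteq \{-n-1\}$ and $T \subseteq \{n+1\}$, let $W_{S',T}$ denote the span of the basis vectors $e_{S' \cup R \cup T}$ with $R \subseteq \{-n,\ldots,n\}$. Direct inspection of the monotone Fock formulas for $b_i$ and $b_i^\dagger$ with $|i|\leq n$ shows that each $W_{S',T}$ is $\mathfrak{B}_n$-invariant. When $-n-1 \in S'$, every basis vector of $W_{\{-n-1\},T}$ has minimum index $-n-1$, which is strictly smaller than every admissible $i$; hence both $b_i$ and $b_i^\dagger$ annihilate all such vectors, so the restriction to $W_{\{-n-1\},T}$ is degenerate and the unit of $\mathfrak{B}_n$ vanishes on that summand. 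When $S' = \emptyset$, the summand $W_{\emptyset,\emptyset}$ coincides with $V_n$ by construction, while $W_{\emptyset,\{n+1\}}$ is $\mathfrak{B}_n$-equivalent to $V_n$ via the map $e_R \mapsto e_{R \cup \{n+1\}}$: the trailing $n+1$ is strictly larger than every $|i| \leq n$ and therefore never interferes with creators or annihilators from $\mathfrak{B}_n$ acting on the initial segment $R$.

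Summing the contributions, $V_{n+1}$ splits as two copies of $V_n$ plus a degenerate summand, so the multiplicity is $2$ at every level, giving the claimed diagram. The only real bookkeeping obstacle I anticipate is verifying that the intertwiner $e_R \mapsto e_{R \cup \{n+1\}}$ commutes with every $b_i$ and $b_i^\dagger$ in the edge case $R = \emptyset$, where the image $e_{n+1}$ has $n+1$ as its unique (hence minimum) index; but this reduces to a direct comparison of the two sides of each Fock formula from the preliminaries and is entirely routine.
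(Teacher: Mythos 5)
Your proposal is correct and follows essentially the same route as the paper: both arguments read off the multiplicity by decomposing the defining irreducible representation $V_{n+1}$ under the action of $\mathfrak{B}_n$ into two copies of $V_n$ plus a degenerate summand. The only difference is cosmetic: the paper factors the inclusion through the intermediate algebra $\mathfrak{C}_n = C^*(\{a_i : -n\le i\le n+1\})$ and invokes a ``direct computation,'' whereas you perform the decomposition in one step and exhibit the invariant subspaces and the intertwiner $e_R \mapsto e_{R\cup\{n+1\}}$ explicitly, which if anything makes the argument more self-contained.
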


\begin{proof}
We need only  say what the embedding of $\gam_n\cong M_{2^{2n+1}}(\bc)$ into $\gam_{n+1}\cong M_{2^{2n+3}}(\bc)$ is explicitly. This is best described in two steps.
To this aim, define $\mathfrak{C}_{n}=C^*(\{a_i: -n\leq i\leq n+1\})$. The same argument employed in the proof
of Proposition \ref{findim} shows that $\mathfrak{C}_{n}\cong M_{2^{2n+2}}(\bc)$ since
the restriction of the operators of  $\mathfrak{C}_{n}$ to 
the  subspace $W_n$ generated by tensors 
$ e_{(i_1,i_2,\ldots,i_k)}$ whose indices $i_1, i_2, \ldots, i_k$ are all taken from the set $\{j: -n\leq j\leq n+1\}$ is
a faithful irreducible representation.\\
By direct computation one finds that $W_n$ is no longer irreducible for $\gam_n$. In fact, up to unitary equivalence one has $W_n= V_n\oplus V_n$, as representations
of $\gam_n$. Phrased differently, the inclusion of $M_{2^{2n+1}}(\bc)\cong\gam_n\subset\mathfrak{C}_n\cong M_{2^{2n+2}}(\bc)$ is realized through an amplification of
multiplicity $2$.\\
As for the inclusion $\mathfrak{C}_n\subset\gam_{n+1}$, a direct computation shows that up to unitary equivalence one now has $V_{n+1}= W_n\oplus 0$,
as representations of $\mathfrak{C}_n$, where $0$ denotes the degenerate subspace (of dimension $2^{2n+2}$) of $\mathfrak{C}_n$ in $V_{n+1}$.

\end{proof}

\begin{prop}\label{traceless}
The monotone $C^*$-algebra $\gb$ is traceless.
\end{prop}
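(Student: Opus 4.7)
The plan is to show that any tracial state $\tau$ on $\gb$ must vanish identically on the ideal $\mathfrak{I}$; combined with the codimension-one property of $\mathfrak{I}$ established earlier, this will force $\tau$ to coincide with the canonical (non-faithful) character $\gb\to\gb/\mathfrak{I}\cong\bc$, so that in particular no faithful tracial state can exist on $\gb$. The decisive ingredient will be the exchange rule \eqref{monrule3}.

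The first step is to apply $\tau$ to the identity $b_i^\dag b_i=b_{i-1}b_{i-1}^\dag-b_ib_i^\dag$ coming from \eqref{monrule3} and exploit the trace identity $\tau(b_i^\dag b_i)=\tau(b_ib_i^\dag)$ to derive the recurrence
\[
\tau(b_{i-1}b_{i-1}^\dag)=2\,\tau(b_ib_i^\dag).
\]
Iterating this backwards $n$ times yields $\tau(b_{i-n}b_{i-n}^\dag)=2^n\tau(b_ib_i^\dag)$. Since $\|\tau\|=1$ and $\|b_jb_j^\dag\|\leq 1$, the left-hand side is bounded by $1$ for every $n$, which forces $\tau(b_ib_i^\dag)=0$ for every $i\in\bz$, and consequently also $\tau(b_i^\dag b_i)=0$.

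In the second step I would propagate this vanishing to the whole ideal via the Cauchy--Schwarz inequality for positive functionals: $|\tau(ab_i)|^2\leq \tau(aa^*)\,\tau(b_i^*b_i)=0$ gives $\tau(ab_i)=0$ for every $a\in\gb$, and symmetrically $\tau(ab_i^\dag)=0$. Since every monomial of positive length in the generators $\{b_j,b_j^\dag:j\in\bz\}$ terminates in a single such generator, $\tau$ annihilates the non-unital $*$-subalgebra generated by $\{b_i\}$, which is norm-dense in $\mathfrak{I}$; by continuity, $\tau|_{\mathfrak{I}}=0$. Because $\mathfrak{I}$ has codimension one in $\gb$, $\tau$ is then completely determined and must coincide with the canonical character, which is non-faithful as it kills $b_0b_0^\dag\neq 0$.

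The only step needing real insight is the very first one: recognizing that the innocuous identity \eqref{monrule3}, combined with the trace property, forces a geometric blow-up incompatible with the boundedness of $\tau$. Once this is noticed, the remainder is routine, relying only on the standard Cauchy--Schwarz estimate for positive functionals and the density of the non-unital $*$-algebra generated by $\{b_i\}$ inside $\mathfrak{I}$.
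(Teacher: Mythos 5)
Your argument is correct, and it reaches the conclusion by a genuinely different route from the paper's. The paper deduces tracelessness from the structure theory it has already established: restricting a trace $\tau$ to $\gb_n\cong M_{2^{2n+1}}(\bc)$ forces $\tau_n=\lambda_n{\rm Tr}$, the multiplicity-$2$ embeddings of Theorem \ref{Bratteli} give the recursion $\lambda_n=2\lambda_{n+1}$, and then $\tau(e_n)=\lambda_0 2^{n+1}$ blows up unless $\lambda_0=0$. You bypass Proposition \ref{findim} and the Bratteli diagram entirely and extract the same doubling directly from the relation \eqref{monrule3}: the trace identity turns $b_i^\dag b_i=b_{i-1}b_{i-1}^\dag-b_ib_i^\dag$ into $\tau(b_{i-1}b_{i-1}^\dag)=2\tau(b_ib_i^\dag)$, and boundedness kills $\tau(b_ib_i^\dag)$; Cauchy--Schwarz then propagates the vanishing to every word of positive length and hence, by density, to all of $\mathfrak{I}$. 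The underlying mechanism is identical --- your factor of $2$ \emph{is} the multiplicity of the embedding, since $b_nb_n^\dag$ acts as a minimal projection in $\gb_n$ --- but your proof is more elementary and self-contained, needing only the monotone rules, positivity, and boundedness of $\tau$, with no input from the AF structure. What the paper's route buys in exchange is uniformity: once the Bratteli diagram is in hand, tracelessness, simplicity of $\mathfrak{I}$, and the $K$-theory all fall out of the same datum. One further small point in your favour: your final sentence correctly identifies that the unital algebra $\gb$ \emph{does} carry one tracial state, namely the character factoring through $\gb/\mathfrak{I}\cong\bc$, so the honest content of the proposition is that $\mathfrak{I}$ admits no nonzero finite trace; the paper's closing claim that ``$\tau$ is null'' is really a statement about $\tau\lceil_{\mathfrak{I}}$, and your formulation makes this explicit.
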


\begin{proof}
We shall argue by contradiction. Suppose $\tau$ is a trace defined on the whole monotone $C^*$-algebra.
Denote by $\tau_n$ the restriction of $\tau$ to the subalgebra $\gb_n\cong M_{2^{2n+1}}(\bc)$. 
Now the normalized trace is the only tracial state that a full matrix algebra can be endowed with. Therefore,
there exists a sequence $\{\lambda_n: n\in\bn\}$ of positive numbers such that
$\tau_n (\cdot)=\lambda_n {\rm Tr} (\cdot)$, where  ${\rm Tr}$ is the non-normalized trace of 
$M_{2^{2n+1}}(\bc)$. Since  by Proposition \ref{Bratteli} the embedding of $\gb_n$ into $\gb_{n+1}$ has multiplicity
$2$, the compatibility condition
$\tau_{n+1}\lceil_{\gb_n} =\tau_n$ entails $\lambda_n=2\lambda_{n+1}$ for every
$n$, which means $\lambda_n=\frac{1}{2^n}\lambda_0$. Let now $\{e_n:n\in\bn\}\subset\gb$ be the sequence where
$e_n$ is the unit of $\gb_n$.  One then finds at once $\tau(e_n)=\lambda_0\frac{2^{2n+1}}{2^n}=\lambda_0 2^{n+1}$.
But because $\|e_n\|=1$ for every $n$, $\tau$ cannot be bounded unless $\lambda_0=0$, in which
case $\tau$ is null.
\end{proof}

We denote by $\mathfrak{D}\subset\mathfrak{I}$ the commutative $C^*$-subalgebra obtained as the inductive limit of
the sequence $\mathfrak{D}_n\subset\mathfrak{B}_n\cong M_{2^{2n+1}}(\bc)$, where, for every
$n\in\bn$, $\mathfrak{D}_n$ is the subalgebra of diagonal matrices. As one would expect, as a consequence of
the UHF-structure of $\mathfrak{I}$, the commutative $C^*$-subalgebra $\mathfrak{D}$ turns out to be maximal 
abelian, namely it is not properly contained in any commutative subalgebra of $\mathfrak{I}$.

\begin{prop}\label{max}
$\mathfrak{D}$ is a maximal abelian subalgebra of $\mathfrak{I}$. 

\end{prop}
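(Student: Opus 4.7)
The plan is to combine an averaging argument inside each finite-dimensional block $\mathfrak{B}_n$ with the density of $\bigcup_n \mathfrak{B}_n$ in $\mathfrak{I}$. Fix $a \in \mathfrak{I}$ commuting with $\mathfrak{D}$ and $\varepsilon > 0$; I will exhibit an element of $\mathfrak{D}$ within $3\varepsilon$ of $a$, whence $a \in \mathfrak{D}$ by closedness.

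First I will record the standard fact that for each $n$ the diagonal $\mathfrak{D}_n$ is a maximal abelian subalgebra of $\mathfrak{B}_n \cong M_{2^{2n+1}}(\mathbb{C})$, and that the canonical conditional expectation $E_n\colon \mathfrak{B}_n \to \mathfrak{D}_n$ coincides with the average
$$E_n(x) = \int_{U(\mathfrak{D}_n)} u x u^* \, du$$
over the normalized Haar measure on the compact abelian unitary group $U(\mathfrak{D}_n)$. Using density I will then choose $n$ and $b \in \mathfrak{B}_n$ with $\|a - b\| < \varepsilon$. The crucial observation is that every $u \in U(\mathfrak{D}_n)$ lies in $\mathfrak{D}$, so $[u, a] = 0$ by hypothesis, and therefore
$$\|u b u^* - b\| = \|[u, b]\| = \|[u, b - a]\| \le 2\|b - a\| < 2\varepsilon.$$
Integrating against Haar measure gives $\|E_n(b) - b\| < 2\varepsilon$, hence $\|a - E_n(b)\| < 3\varepsilon$, and the conclusion will follow because $E_n(b) \in \mathfrak{D}_n \subset \mathfrak{D}$.

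The step that I expect to require the most care is verifying the coherence of the pieces: one needs to know that the subalgebras $\mathfrak{D}_n$ nest compatibly with the inclusions $\mathfrak{B}_n \hookrightarrow \mathfrak{B}_{n+1}$, so that $\mathfrak{D} = \overline{\bigcup_n \mathfrak{D}_n}$ is well-defined as a subalgebra of $\mathfrak{I}$ and the local expectations $E_n$ form part of a globally consistent picture. This is a direct check from the explicit description of the embedding given in the proof of Theorem \ref{Bratteli}, where each minimal projection of $\mathfrak{D}_n$ is seen to decompose as a sum of two minimal projections of $\mathfrak{D}_{n+1}$. A pleasant feature of this averaging approach, in contrast with trace-theoretic arguments (unavailable here by Proposition \ref{traceless}), is that the estimate $\|E_n(b) - b\| \le 2\|b - a\|$ is uniform in $n$, so the dimensional growth of the blocks plays no role.
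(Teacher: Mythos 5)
Your proof is correct, and it takes a genuinely different route from the paper's. The paper works in the Fock representation: it identifies $\mathfrak{D}''$ with the von Neumann algebra of all operators diagonal with respect to the canonical basis of $\mathcal{F}_{\rm mon}(\mathcal{H})$ (hence a masa in $\mathcal{B}(\mathcal{F}_{\rm mon}(\mathcal{H}))$, so that $\mathfrak{D}'\cap\mathfrak{I}=\mathfrak{D}''\cap\mathfrak{I}$), takes the global normal conditional expectation $E$ onto $\mathfrak{D}''$, checks $E[\mathfrak{I}]=\mathfrak{D}$, and then approximates a given $T\in\mathfrak{D}''\cap\mathfrak{I}$ by elements of the $\mathfrak{B}_{n_k}$ to conclude $T=E[T]\in\mathfrak{D}$. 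You instead run the standard intrinsic masa argument for the diagonal of a UHF algebra: local conditional expectations $E_n$ realized by averaging over the torus $U(\mathfrak{D}_n)$, combined with the commutation hypothesis and a $3\varepsilon$ estimate. Your version buys representation-independence and avoids having to identify $\mathfrak{D}''$ concretely or justify $E[\mathfrak{I}]=\mathfrak{D}$ for the global expectation; the paper's version is shorter once the spatial picture of Proposition \ref{findim} is in hand. Two minor remarks: the coherence issue you flag (that the $\mathfrak{D}_n$ nest under the embeddings) is really part of the well-definedness of $\mathfrak{D}$ as the paper defines it, and is indeed settled by the explicit description of the inclusions in Theorem \ref{Bratteli}; and global compatibility of the $E_n$ is not actually needed for your argument, since all you use is $E_n(b)\in\mathfrak{D}_n\subset\mathfrak{D}$. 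Also note that the unitaries $u\in U(\mathfrak{D}_n)$ satisfy $uu^*=u^*u=e_n$ (the unit of $\mathfrak{B}_n$, not of $\mathfrak{I}$), but since $b=e_nbe_n$ the identity $ubu^*-b=[u,b]u^*$ and the ensuing estimates go through unchanged.
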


\begin{proof}
We shall think of $\mathfrak{I}$ as the concrete $C^*$-algebra acting on  the monotone  Fock space.
We start by observing that $\mathfrak{D}''$ coincides with the von Neumann algebra of all diagonal operators w.r.t. the canonical basis of $\cf_{\rm mon}(\ch)$. This can be easily seen by going back to the proof of Proposition \ref{findim} and noting that
the projections onto the finite-dimensional subspace $V_n$ converge strongly to $I$.\\
Let $E$ denote the canonical conditional expectation from $\cb(\cf_{\rm mon}(\ch))$ onto $\mathfrak{D}''$, 
{\it i.e. }$E[T]_{i, j}=T_{i, j}\delta_{i, j}$, for all $i, j\in\bz$ (where $\bz$ here indexes the elements of the basis of $\cf_{\rm mon}(\ch)$ ).
Now it is easy to see that $E[\mathfrak{I}]=\mathfrak{D}$.\\
We are ready to show that $\mathfrak{D}$ is maximal abelian in $\mathfrak{I}$, namely
$\mathfrak{D}'\cap\mathfrak{I}=\mathfrak{D}$. Obviously, it is only the inclusion 
$\mathfrak{D}'\cap\mathfrak{I}\subset\mathfrak{D}$ that needs to be dealt with. To this end, pick
$T\in \mathfrak{D}'\cap\mathfrak{I}=\mathfrak{D}''\cap\mathfrak{I}$. Since $T$ sits in $\mathfrak{I}$, there must exist
a sequence $\{T_k:k\in\bn\}$ such that $T_k\in \mathfrak{B}_{n_k}$ and $\|T_k-T\|\rightarrow 0$ for some sequence
$\{n_k:k\in\bz\}\in\bn$. But then by continuity $E[T_k]$ converges in norm to $E[T]$. Because $T$ also lies in 
$\mathfrak{D}''$, we have $E[T]=T$, hence $T$ belongs to $\mathfrak{D}$.
\end{proof}

Having the Bratteli diagram of $\mathfrak{I}$ also allows us to compute the $K$-theory of $\mathfrak{I}$.
For a quick account of $K$-theory for $C^*$-algebras the reader is referred to \emph{e.g.} Chapter $4$ of \cite{D}.
More precisely, in the next proposition we provide the ordered $K_0$ group as well as its scale. Denote by $\bz\left[\frac{1}{2}\right]$ the group of dyadic numbers.

\begin{prop}\label{ktheory}
$K_0(\mathfrak{I})= (\bz\left[\frac{1}{2}\right], \bz_+\left[\frac{1}{2}\right], \bz_+)$.
\end{prop}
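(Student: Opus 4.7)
My plan is to exploit the general principle that for an AF algebra, $K_0$ is computed as the inductive limit of the $K_0$-groups of its finite-dimensional building blocks, with transition maps read off directly from the Bratteli diagram, and then to identify the scale by running through the projections that live in the finite levels.

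First, I would observe that each $\mathfrak{B}_n \cong M_{2^{2n+1}}(\bc)$ has $K_0(\mathfrak{B}_n) \cong \bz$, with the class of a minimal projection as canonical generator and positive cone $\bz_+$. By Theorem \ref{Bratteli} the inclusion $\mathfrak{B}_n \hookrightarrow \mathfrak{B}_{n+1}$ has multiplicity $2$, so the induced map on $K_0$ is multiplication by $2$. Hence
$$K_0(\mathfrak{I})\;=\;\varinjlim_{n}\bigl(\bz\xrightarrow{\,\times 2\,}\bz\xrightarrow{\,\times 2\,}\cdots\bigr),$$
and identifying the class of a minimal projection in $\mathfrak{B}_n$ with $2^{-n}\in\bz\!\left[\tfrac{1}{2}\right]$ yields a well-defined order isomorphism of the limit onto $\bigl(\bz\!\left[\tfrac{1}{2}\right],\bz_+\!\left[\tfrac{1}{2}\right]\bigr)$.

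Next, for the scale, I would invoke the standard AF fact that any projection in $\mathfrak{I}$ is Murray--von Neumann equivalent to a projection in some $\mathfrak{B}_n$, and that projections in $\mathfrak{B}_n \cong M_{2^{2n+1}}(\bc)$ are classified up to equivalence by their rank $k\in\{0,1,\ldots,2^{2n+1}\}$. Under the identification above these contribute exactly the values $k\cdot 2^{-n}$ for $0\le k\le 2^{2n+1}$, that is, all dyadic rationals in the interval $[0,2^{n+1}]$. Taking the union over $n$ exhausts $\bz_+\!\left[\tfrac{1}{2}\right]$, and in particular the scale is unbounded, as claimed.

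I do not expect any serious obstacle: the calculation is essentially a bookkeeping exercise on the dyadic inductive system $\bz\xrightarrow{\times 2}\bz\xrightarrow{\times 2}\cdots$ produced by Theorem \ref{Bratteli}. The only points that require a little care are that the transition maps are indeed multiplication by $2$ (which follows because the ``amplification of multiplicity $2$'' half of the two-step embedding $\mathfrak{B}_n\subset\mathfrak{C}_n\subset\mathfrak{B}_{n+1}$ doubles ranks while the second half is rank-preserving), and the correct identification of the positive cone and scale under the isomorphism with $\bz\!\left[\tfrac{1}{2}\right]$.
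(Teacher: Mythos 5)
Your proposal is correct and follows essentially the same route as the paper: $K_0$ of each $\mathfrak{B}_n\cong M_{2^{2n+1}}(\bc)$ is $\bz$, the multiplicity-$2$ embeddings from Theorem \ref{Bratteli} induce multiplication by $2$, and continuity of $K_0$ under inductive limits gives $\varinjlim(\bz\xrightarrow{\times 2}\bz\xrightarrow{\times 2}\cdots)\cong\bz\left[\frac{1}{2}\right]$ with the expected positive cone. One remark on the scale: the paper's own proof does not treat it at all, whereas you do, and your bookkeeping gives $\bigcup_n\{k2^{-n}:0\le k\le 2^{2n+1}\}=\bz_+\left[\frac{1}{2}\right]$, which does not literally match the third component $\bz_+$ asserted in the statement. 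Your value appears to be the correct one --- a minimal projection of $\mathfrak{B}_1$ already has class $\frac{1}{2}$, which lies in the dimension range but not in $\bz_+$ --- so the discrepancy is with the printed statement rather than with your argument; in any case the substantive conclusion (the scale is unbounded) is established either way.
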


\begin{proof}
As is well known, one has $K_0(M_k(\bc))= (\bz, \bz_+ , [0, k])$ for every $k\in\bn$.
In particular, $G_n:= K_0(\gb_n)= (\bz, \bz_+ , [0, 2^{2n+1}])$. By virtue of 
Proposition \ref{Bratteli}, the group homomorphisms
$\varphi_n: G_n\rightarrow G_{n+1}$ induced by the embedding of $\gb_n$ into $\gb_{n+1}$ are the multiplication by $2$. The result now easily follows from the continuity
of the $K_0$-functor w.r.t. inductive limits since $$\lim_{\rightarrow} G_n\cong \bz\left[\frac{1}{2}\right]\,.$$

\end{proof}

\section*{Acknowledgments}
\noindent
We acknowledge  the support of Italian INDAM-GNAMPA.

\end{document}